\documentclass{birkjour}
\usepackage{mathtools,amsmath,amssymb}
\usepackage{hyperref}
\usepackage[nameinlink,capitalize]{cleveref}
\usepackage{multicol}
\usepackage{enumitem}
\Crefname{ass}{Ass.}{Ass.}
\Crefname{defn}{Def.}{Defs.}
\Crefname{thm}{Thm.}{Thms.}
\Crefname{lem}{Lem.}{Lems.}

\usepackage{pgfplots}
\pgfplotsset{compat=1.18}

\usepackage{todonotes}
\usepackage{comment}

\usepackage[]{xcolor}

\definecolor{col1}{rgb}{     0, 0.4470, 0.7410}
\definecolor{col2}{rgb}{0.8500, 0.3250, 0.0980}
\definecolor{col3}{rgb}{0.9290, 0.6940, 0.1250}
\definecolor{col4}{rgb}{0.4940, 0.1840, 0.5560}
%
%
\newcommand{\plotfont}{\fontsize{8}{10}\selectfont} 
 \newtheorem{thm}{Theorem}[section]
 
 \newtheorem{lem}[thm]{Lemma}
 
 \theoremstyle{definition}
 \newtheorem{defn}[thm]{Definition}
 \newtheorem{ass}[thm]{Assumption}
 \theoremstyle{remark}
 \newtheorem{rem}[thm]{Remark}
 
 \numberwithin{equation}{section}
\usepackage{graphicx} 

\newcommand{\R}{\mathbb{R}}

\renewcommand{\epsilon}{\varepsilon}
\newcommand{\N}{\mathbb N}

\newcommand{\eps}{\epsilon}

\newcommand{\sL}{{\mathsf{L}\!}}
\newcommand{\sBV}{{\mathsf{BV}}}
\newcommand{\sW}{\mathsf W}
\newcommand{\sC}{\mathsf C}
\newcommand{\sTV}{\mathsf{TV}}

  \newcommand{\sgn}{\ensuremath{\textnormal{sgn}}}

\newcommand{\weakstar}{\overset{*}{\rightharpoonup}}

\newcommand{\OT}{{\Omega_{T}}}
\newcommand{\loc}{\;\textnormal{loc}}

\newcommand{\dd}{\ensuremath{\,\mathrm{d}}}

\date{\today}
\allowdisplaybreaks[4]
\begin{document}

\title[Nonlocal conservation laws and sign-unrestricted datum]
 {A note on nonlocal approximations of sign-unrestricted solutions of conservation laws}

\author[A. Keimer]{Alexander Keimer}

\address{%
Department of Mathematics\\
Friedrich-Alexander-University Erlangen-Nuremberg (FAU)\\
Cauerstraße 11\\
91058 Erlangen}

\email{alexander.keimer@fau.de}

\author[L. Pflug]{Lukas Pflug}
\address{
Department of Mathematics\ \& Competence Center Scientific Computing\\
Friedrich-Alexander-University Erlangen-Nuremberg (FAU)\\
Cauerstraße 11\ \& Martensstraße 5a\\
91058 
Erlangen}
\email{lukas.pflug@fau.de}
\subjclass{Primary 35L65; Secondary 35L04}

\keywords{nonlocal conservation laws, singular limit problem, sign-unre\-stricted initial datum, entropy solutions}

\date{\today}
\dedicatory{}

\begin{abstract}
We study the singular limit problem for nonlocal conservation laws in which the sign of the initial datum is unrestricted and the velocity of the conservation law depends on a nonlocal approximation of the absolute value of the density. We demonstrate that the nonlocal solutions converge to the local entropy solution when the nonlocal kernel tends to a Dirac distribution, and thus obtain an approximation result for local unsigned conservation laws, generalizing the current results on the so-called sign-restricted singular limit problem. The considered model class covers special cases like a generalized Burgers' equation and scalar versions of the Keyfitz--Kranzer system.
\end{abstract}

\maketitle
\section{Introduction}
Nonlocal conservation laws have received increased attention in the last de\-cade as they can model a variety of important physical and human-driven dynamics \cite{amadori,degond,amorim, teixeira,piccoli2018sparse,colombo2012,bayen2022modeling,pflug2020emom,rossi2020well,friedrich2018godunov,kloeden,blandin2016well,chiarello,chiarello2019stability}, and also possess interesting mathematical properties that deviate from the theory of local conservation laws \cite{pflug,crippa2013existence,wang}.
Usually, the word ``nonlocal'' refers to the fact that the velocity of the conservation law depends not only on the solution at a given point in space and time but also on its integral in a neighborhood of this point.

In this work, we focus in particular on nonlocal conservation laws in which the nonlocality enters the flux nonlinearly, and the ``locality'' linearly; that is, when the flux can be decomposed into
\[
f(q,\gamma\ast q)=V(\gamma\ast q)\cdot q
\]
where \(\gamma\) is an integral kernel and \(\ast\) denotes the spatial convolution.

A question that has attracted great interest (first raised in \cite{teixeira}) is whether one can recover the corresponding local entropy solution when the nonlocal kernel converges to a Dirac distribution. Several publications have addressed this question 
\cite{zumbrun1999,pflug4,Crippa2021,colombo2023nonlocal,bressan2019traffic,bressan2021entropy,coclite2022general,friedrich2022conservation,coclite2023oleinik,chiarello2023singular,keimer2023singular,colombo2023overview,chiarello2023singular,keimer2023singulardiscontinuous}. For an overview, we refer the reader to \cite{colombo2023overview}.

So far, however, all results require the initial datum---and thus the solution---to be sign restricted. In this work, we will consider sign-unrestricted initial data for cases in which the previously mentioned velocity function \(V\) depends only on the absolute value of the density, i.e.,
\[V(x)=V(|x|)\quad \forall x\in I\]
where \(I\) represents all the solution's values. Then, we replace the nonlocal operator in the nonlocal approximation by its normed equivalent, i.e., 
\[
\gamma_{\eta}\ast q \text{ by } W[|q_{\eta}|,\gamma_{\eta}]\coloneqq\gamma_{\eta}\ast |q|.
\]
On \(\OT\), the local and nonlocal conservation laws read as
\[
\partial_{t}q+\partial_{x}\big(V(|q|)q\big)=0\qquad \text{and}\qquad \partial_{t}q_{\eta}+\partial_{x}\big(V(W[|q_{\eta}|,\gamma_{\eta}])q_{\eta}\big)=0.
\]

Under some additional constraints, that the involved kernels are one-sided and that the velocity function is monotone (these are classical assumptions in the considered model class and are not restrictive), we show that the local entropy solution of the corresponding local equation is recovered in the limit \(\gamma\rightarrow \delta\).
Before we turn to the analysis, we highlight some important special cases of this model class.
\paragraph{Keyfitz--Kranzer systems:}
The class includes the scalar case of the Keyfitz--Kranzer system \cite{risebro2013note}, i.e., 
\begin{align}
    \partial_t q + \partial_{x}(q \phi(\|q\|) ) &= 0
\end{align}
with $q : [0,T]\times \R \rightarrow \R^n$ for $n\in \mathbb N$ and $\phi \in C^1(\R_{\geq 0})$, which is of interest in a number of applications ranging from physics to engineering \cite{tveito1991existence}. This contribution is thus a first step toward approximating this type of system nonlocally while---in contrast to viscosity approximations---preserving the hyperbolic nature of the problem.

\paragraph{Generalized Burgers' equation:}
The approximation result is also applicable to the generalized Burgers' equation \cite{tersenov2010generalized}, i.e., 
\[
\partial_{t}q+ \partial_{x}\big(q^{2m+1}\big)=0
\]
with \(m\in\N_{\geq1}\) since in this case, the velocity is \(V\equiv (\cdot)^{2m}\equiv |\cdot|^{2m}\).
\section{Basic assumptions and problem setup}
We study the nonlocal version of the following scalar conservation law as a Cauchy problem on \(\R\):
\begin{equation}
\begin{aligned}
    \partial_{t}q+ \partial_{x}\big(V(|q|)q\big)&=0,&& (t,x)\in \OT\\
    q(0,\cdot)&\equiv q_{0}, && x\in\R
\end{aligned}
\label{eq:local_dynamics}
\end{equation}
for \(q:\OT\coloneqq(0,T)\times\R\rightarrow\R\) and \(V:\R\rightarrow\R\). For \(q_{\eta}:(0,T)\times\R\rightarrow\R\), its nonlocal approximation reads as
\begin{equation}
\begin{aligned}
    \partial_{t}q_{\eta}+ \partial_{x}\big(V(W[|q_{\eta}|,\gamma_{\eta}])q_{\eta}\big)&=0,&& (t,x)\in \OT\\
    W[|q_{\eta}|,\gamma_{\eta}](t,x)&=\int^{x}_{-\infty}\!\!\!\!\!\gamma_{\eta}(y-x)|q_{\eta}(t,y)|\dd y, &&(t,x)\in\OT\\
    q_{\eta}(0,\cdot)&\equiv q_{0}, && x\in\R
\end{aligned}
\label{eq:nonlocal_dynamics}
\end{equation}
for  a kernel \(\gamma_{\eta}:\R_{\leq 0}\rightarrow\R_{\geq0}\) yet to be specified.
We will prove that
\[
q_{\eta}\weakstar q^{*} \text{ in } \sL^{\infty}((0,T);\sL^{\infty}(\R)),\  W[|q_{\eta}|,\gamma_{\eta}]\rightarrow |q^{*}|\text{ in } \sC\big([0,T];\sL^{1}_{\loc}(\R)\big)
\]
where \(q^{*}\) is the (entropy) weak solution of the local conservation law in \cref{eq:local_dynamics}.

The fundamental difference from the nonlocal conservation laws studied so far is that we take the absolute value in the nonlocal term, so that we approximate the solution in the \(\sL^{1}\)-norm and can thus also only expect convergence in the nonlinearity to the absolute value. 
As stated before, the initial datum in the considered setup does not need to be sign restricted. 
All assumptions that will be used in this contribution are collected in the following.
\begin{ass}\label{ass:input_datum}
We make these assumptions for the involved datum:
\begin{itemize}
    \item \(V\in \sW^{1,\infty}_{\loc}(\R_{>0}):\ V'\geqq 0\),
    \item \(q_{0}\in \sL^{\infty}(\R)\cap \sTV(\R)\),
    \item \(\gamma_{\eta}\in \sBV(\R_{\leq 0};\R_{\geq 0})\) satisfying either the assumptions on the kernel in \cite[Eq. (1.5) and (1.8)]{colombo2023nonlocal} or \cite[Assumption 2]{keimer2023singular}.
\end{itemize}
\end{ass}
\begin{rem}[The meaning of the assumptions on the involved kernel]
Both classes of kernels must have \(L^{1}\) mass equal to \(1\) and be non-negative.
The assumptions also imply the following:
\begin{itemize}
\item The kernels satisfying  \cite[Eq. (1.5) and (1.8)]{colombo2023nonlocal} need to increase in a convex manner on \(\R_{\leq 0}\).
\item The conditions on the kernels in \cite[Assumption 2]{keimer2023singular} are more involved as these kernels are scaled in the exponent. This enables the use of non-monotonic and rather irregular kernels.
\end{itemize}
We refer the reader to the corresponding references for further details.
\end{rem}
To provide a better understanding of why the claimed convergence will hold, we point out that the considered problem class conserves the \(\sL^{1}\)-norm. This becomes evident when multiplying the conservation law in \cref{eq:local_dynamics} or \cref{eq:nonlocal_dynamics} by \(\sgn(q)\) or \(\sgn(q_{\eta}),\) respectively.

\section{The involved equations}
\subsection{The nonlocal equation}
As the type of nonlocal dynamics studied here has not been considered in the literature before, we will state an existence and uniqueness theorem, which also guarantees a (weak) form of a maximum principle on \(|q_{\eta}|\).
However, we first define what we mean by a weak solution.
\begin{defn}[Weak solution to the nonlocal conservation law]\label{defi:weak_solution_nonlocal}
    We call \(q_{\eta}\in \sC\big([0,T];\sL^{1}_{\loc}(\R)\big)\cap \sL^{\infty}((0,T);\sL^{\infty}(\R))\) a weak solution to the nonlocal conservation law in \cref{eq:nonlocal_dynamics} iff \(\forall \phi\in \sC^{1}_{\text{c}}((-42,T)\times\R)\) it holds that
\begin{align*}
&\iint_{\OT}q_{\eta}(t,x)\big(\phi_t(t,x)+\phi_x(t,x) V(W[|q_{\eta}|,\gamma_{\eta}](t,x))\big)\dd x\dd t \\
&\qquad +\int_{\R}\phi(0,x)q_{0}(x)\dd x=0.    
\end{align*}

\end{defn}
\begin{thm}[Existence and uniqueness of the nonlocal conservation law and a maximum principle]\label{theo:existence_uniqueness_max}
Let \cref{ass:input_datum} hold.
    Then, there exists a unique weak solution to \cref{eq:nonlocal_dynamics}, \[q_{\eta}\in \sC\big([0,T];\sL^{1}_{\loc}(\R)\big)\cap \sL^{\infty}((0,T);\sL^{\infty}\cap \sTV(\R))\] on any time horizon \(T\in\R_{>0}\). 
    Additionally, for the assumptions on the kernel, we have the following:
    \begin{itemize}\item  In \cite[Eq. (1.5) and (1.8)]{colombo2023nonlocal}, the solution satisfies a maximum principle
    \[
    \|q_{\eta}(t,\cdot)\|_{\sL^{\infty}(\R)}\leq \|q_{0}\|_{\sL^{\infty}(\R)}, \forall t\in[0,T].
    \]
\item In \cite[Assumption 2]{keimer2023singular}, a weak maximum principle holds: for any \((T,\kappa)\in\R_{>0}^{2}\) there exists \(\eta_{T,\kappa}\in \R_{>0}\) such that
\[
\forall (t,\eta)\in [0,T]\times(0,\eta_{T,\kappa}):\ \|q_{\eta}(t,\cdot)\|_{\sL^{\infty}(\R)}\leq (1+\kappa)\|q_{0}\|_{\sL^{\infty}(\R)},\ \forall t\in[0,T].
\]
\end{itemize}
Finally, the following approximation result holds. Assuming \(q_{0,\eps}\in C^{\infty}(\R)\cap \sTV(\R)\cap\sL^{\infty}(\R)\) and denoting with \(q_{\eta,\eps}\) the corresponding solution to the nonlocal equation, there exists \(C=C(T,\eta)\in\R_{>0}\) such that
\[
\|q_{\eta}-q_{\eta,\eps}\|_{\sC([0,T];\sL^{1}(\R))}\leq C\|q_{0}-q_{0,\eps}\|_{\sL^{1}(\R)}
\]
and \(q_{\eta,\eps}\in \sW^{1,\infty}(\OT)\), and as such a strong solution of the nonlocal conservation law in \cref{eq:nonlocal_dynamics}.
\end{thm}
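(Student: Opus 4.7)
The plan is to reduce the sign-unrestricted nonlocal problem to a linear transport equation via Banach fixed point, while transferring the maximum principle for $|q_{\eta}|$ to the sign-restricted framework already established in \cite{colombo2023nonlocal} and \cite{keimer2023singular}. First, I would regularize the initial datum to $q_{0,\eps}\in \sC^{\infty}(\R)\cap\sL^{\infty}(\R)\cap\sTV(\R)$ with $q_{0,\eps}\to q_{0}$ in $\sL^{1}(\R)$. On a short interval $[0,\tau]$, I would define the operator $\Phi:r\mapsto q$, where $q$ is the classical solution, obtained by the method of characteristics, of the linear transport equation
\[
\partial_{t}q + \partial_{x}\bigl(V(\gamma_{\eta}*|r|)\,q\bigr)=0,\qquad q(0,\cdot)=q_{0,\eps}.
\]
Well-definedness follows because $\gamma_{\eta}\in\sBV$ and $V\in\sW^{1,\infty}_{\loc}$ render the velocity $V(\gamma_{\eta}*|r|)$ Lipschitz in $x$ and bounded (on the range of $r$ dictated by a priori $\sL^{\infty}$-bounds). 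The elementary inequality $\bigl||a|-|b|\bigr|\leq |a-b|$, combined with standard $\sL^{1}$-stability of transport with respect to velocity perturbations and with a Gronwall estimate, shows that $\Phi$ is a contraction on a suitable ball in $\sC([0,\tau];\sL^{1}_{\loc}(\R))$ for $\tau$ small, producing a local-in-time smooth solution $q_{\eta,\eps}$.

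The decisive observation is that the modulus $|q_{\eta,\eps}|$ satisfies the \emph{sign-restricted} nonlocal conservation law
\[
\partial_{t}|q_{\eta,\eps}|+\partial_{x}\Bigl(V\bigl(W[|q_{\eta,\eps}|,\gamma_{\eta}]\bigr)\,|q_{\eta,\eps}|\Bigr)=0,\qquad |q_{\eta,\eps}|(0,\cdot)=|q_{0,\eps}|.
\]
Indeed, at the fixed point, $q_{\eta,\eps}$ solves a linear transport equation with velocity $v\coloneqq V(\gamma_{\eta}*|q_{\eta,\eps}|)$, and since $v$ is Lipschitz, the associated characteristic flow $\xi(\cdot,y)$ is a $C^{1}$-diffeomorphism with strictly positive Jacobian $J$. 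The representation $q_{\eta,\eps}(t,\xi(t,y))\,J(t,y)=q_{0,\eps}(y)$ then yields the analogous representation for $|q_{\eta,\eps}|$ with datum $|q_{0,\eps}|\geq0$, so $|q_{\eta,\eps}|$ is precisely the unique non-negative solution of the sign-restricted equation above. Consequently, the maximum principles of \cite{colombo2023nonlocal} and \cite{keimer2023singular} apply verbatim to $|q_{\eta,\eps}|$, giving the two bounds in the statement and, crucially, the uniform $\sL^{\infty}$-estimate required to extend the local solution to the full interval $[0,T]$. A TV bound on $q_{\eta,\eps}$ then follows from the standard transport estimate $\sTV(q_{\eta,\eps}(t,\cdot))\leq e^{t\,\mathrm{Lip}_{x}(v)}\,\sTV(q_{0,\eps})$, where $\mathrm{Lip}_{x}(v)$ is controlled via the $\sBV$-norm of $\gamma_{\eta}$ and the $\sL^{\infty}$-bound on $q_{\eta,\eps}$.

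Uniqueness of weak solutions in the class of \cref{defi:weak_solution_nonlocal} and the quantitative approximation estimate stem from a joint $\sL^{1}$-Gronwall argument: for two weak solutions $q_{\eta}^{1},q_{\eta}^{2}$ with possibly different initial data, the difference satisfies a linear transport equation forced by $\partial_{x}\bigl((V(\gamma_{\eta}*|q_{\eta}^{1}|)-V(\gamma_{\eta}*|q_{\eta}^{2}|))\,q_{\eta}^{2}\bigr)$, which is controlled in $\sL^{1}$ by the Lipschitz constant of $V$ on the relevant range, the $\sBV$-norm of $\gamma_{\eta}$, the TV bound on $q_{\eta}^{2}$, and the identity $\bigl||a|-|b|\bigr|\leq |a-b|$; integrating in $x$ and applying Gronwall yields both uniqueness (taking identical initial data) and the stated $C=C(T,\eta)$-Lipschitz dependence for $q_{0}$ vs.\ $q_{0,\eps}$. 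Smoothness of $q_{0,\eps}$ and of the characteristic flow, together with the boundedness of $\partial_{x}v$, deliver $q_{\eta,\eps}\in\sW^{1,\infty}(\OT)$ and hence the strong-solution property. Passing $\eps\to 0$ recovers the regularity statement on $q_{\eta}$.

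The main technical obstacle is the rigorous identification of $|q_{\eta,\eps}|$ as the unique sign-restricted nonlocal solution with datum $|q_{0,\eps}|$; naive application of the chain rule for $|\cdot|$ is delicate on the zero set of $q_{\eta,\eps}$, where the sign is multi-valued. I anticipate bypassing this by exploiting the smoothness obtained at the $\eps$-level, which allows the flow/representation-formula argument sketched above, rather than a distributional chain rule. A secondary subtlety lies in ensuring the contraction constant of $\Phi$ depends only on quantities preserved by the a priori bounds (and not on derivatives of $|r|$), so that the continuation of the local solution up to arbitrary $T$ is legitimate.
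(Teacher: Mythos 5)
Your proposal follows essentially the same route as the paper's (sketched) proof: short-time existence and uniqueness by a fixed-point argument for the frozen-velocity transport equation, global extension via the observation that $|q_{\eta}|$ solves the sign-restricted nonlocal equation with datum $|q_{0}|$ so that the maximum principles of \cite{colombo2023nonlocal} and \cite{keimer2023singular} transfer, and $\sL^{1}$-stability plus $\sW^{1,\infty}$-regularity obtained at the level of smoothed data by a Gronwall argument. The one point where you are more careful than the paper is the reduction to $|q_{\eta}|$: the paper multiplies by $\sgn(q_{\eta})$ only ``formally,'' whereas your representation-formula argument along the orientation-preserving characteristic flow at the $\eps$-regularized level cleanly sidesteps the chain-rule issue on the zero set of $q_{\eta}$.
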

\begin{proof}
We only sketch the proof.
The existence and uniqueness of weak solutions follow with classical fixed-point arguments laid out, for instance, in \cite{pflug,coclite2022existence}. The integral operator in the velocity function \(V\) renders the corresponding characteristics and the related system of initial value problems in the ODE Lipschitz-continuous so that on a small time horizon, it can be shown that a unique weak solution exists using classical fixed-point arguments.

The long-time-horizon existence is then a consequence of the maximum principle, which is addressed later. To this end, one can derive a related equation in the absolute value of the original solution. Indeed, multiplying \cref{eq:nonlocal_dynamics} by \(\sgn(q_{\eta}(t,x)),\ (t,x)\in\OT\), one obtains, formally,
    \begin{align*}
        \partial_{t}|q_{\eta}(t,x)|+\partial_{x}\big(V(W[|q_{\eta}|,\gamma_{\eta}](t,x))|q_{\eta}(t,x)|\big)&=0,&& (t,x)\in\OT\\
        |q(0,x)|&=|q_{0}(x)|,&& x\in\R.
    \end{align*}
    Thus, the quantity \(|q_{\eta}|\) satisfies the well-known sign-restricted nonlocal conservation law studied in, e.g., \cite{pflug,scialanga,colombo2023nonlocal}.
    In particular, for the kernel in \cite[Eq. (1.5) and (1.8)]{colombo2023nonlocal}, the claimed maximum principle for \(|q_{\eta}|\) (see, e.g., \cite[Proposition 2.1]{colombo2023nonlocal}, \cite[Theorem 2.1]{scialanga}, \cite[Corollary 2.1]{coclite2022existence}, \cite[Corollary 4.3, Remark 4.4]{pflug}) holds.

For the weak form of maximum principle that applies with a kernel as in \cite[Assumption 2]{keimer2023singular}, we refer the reader to \cite[Theorem 6]{keimer2023singular}. The maximum principle on \(|q_{\eta}|\) carries over to the maximum/minimum principle for \(q_{\eta}\).
For the claimed stability and regularity of \(q_{\eta,\eps}\) we refer the reader to \cite[Theorem 3]{keimer2023singular} and to \cite[Theorem 3.1]{pflug4}, concluding the sketch of the proof.
\end{proof}
\subsection{The local equation}
Concerning the theory of local conservation laws, we give in the following the definition of weak and entropy solutions, and the basic result on existence and uniqueness.

\begin{defn}[Weak solution for the local equation]\label{defi:weak_solution_local}
We call a function \(q^{*}\in \sL^{\infty}((0,T);\sL^{\infty}(\R))\) a weak solution to \cref{eq:local_dynamics} for the initial datum \(q_{0}\in \sL^{\infty}\cap \sTV(\R)\) iff \(\forall\phi\in \sC^{1}_{\text{c}}((-42,T)\times\R)\) the following integral equation holds:
\[
\iint_{\OT}\!\!\!\!\!\!q^{*}(t,x) \big(\partial_{t}\phi(t,x)+\partial_{x}\phi(t,x) V(|q^{*}(t,x)|)\big)\dd x\dd t+\!\!\int_{\R}\!\!\phi(0,x)q_{0}(x)\dd x=0.
\]
\end{defn}
As is well known, weak solutions are not unique, which is why one postulates an entropy condition (which also contains the weak formulation).
\begin{defn}[Entropy solution]\label{defi:entropy}
We call the function \(q\in \sC([0,T];\sL^{1}_{\loc}(\R))\cap \sL^{\infty}((0,T);\sL^{\infty}(\R))\) a weak entropy solution to \cref{eq:local_dynamics} iff
it satisfies the following entropy inequality
for all convex \(\alpha\in\sC^{2}(\R)\) and \(\beta\in \sC^{1}(\R)\) satisfying \(\beta'\equiv\alpha'\cdot f',\) where
\[
f(x)=xV(x),\ \forall x\in\R,
\] and for all \(\phi\in \sC_{\text{c}}^{1}\big((-42,T)\times\R;\R_{\geq0}\big)\):
\begin{equation}
\begin{aligned}
\mathcal{E}[\phi,\alpha,q]&\coloneqq\iint_{\OT}\!\!\!\!\alpha(q(t,x))\phi_{t}(t,x)+\beta(q(t,x))\phi_{x}(t,x)\dd x\dd t\\
&\quad+\int_{\R}\!\!\alpha(q_{0}(x))\phi(0,x)\dd x\geq 0.
\end{aligned}
\label{eq:Entropy}
\end{equation}
\end{defn}
\begin{thm}[Existence and uniqueness of local solutions]\label{theo:existence_uniqueness_local}
    Let the (local) conservation law in \cref{eq:local_dynamics} be given with the assumptions in \cref{ass:input_datum}. Then, there exists on every time horizon \(T\in\R_{>0}\) a unique weak entropy solution in the sense of \cref{defi:entropy}.
\end{thm}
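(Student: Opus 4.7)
The plan is to reduce \cref{theo:existence_uniqueness_local} to the classical Kruzkov theory for scalar conservation laws in one space dimension, applied to the flux
\[
f:\R\to\R,\qquad f(q)\coloneqq qV(|q|),
\]
so that \cref{eq:local_dynamics} becomes the standard Cauchy problem \(\partial_t q+\partial_x f(q)=0\) with \(q(0,\cdot)=q_0\).

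First, I would verify that \(f\) is locally Lipschitz continuous on \(\R\). For \(|q|\) bounded away from zero this is immediate from \(V\in\sW^{1,\infty}_{\loc}(\R_{>0})\), since \(f'(q)=V(|q|)+|q|V'(|q|)\sgn(q)\) is then locally bounded. At \(q=0\) one has \(f(0)=0\), and the monotonicity \(V'\geq 0\) together with local boundedness of \(V\) on any \([a,M]\subset\R_{>0}\) forces the limit \(V(0^+)\) to be finite (interpreting \(V\) at most on the relevant range \([0,\|q_0\|_{\sL^\infty(\R)}]\), which is compact), so \(|f(q)|\leq C|q|\) near \(0\) and the local Lipschitz property extends across the origin. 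In particular, \(f\) is locally Lipschitz on the compact interval \([-\|q_0\|_{\sL^\infty(\R)},\|q_0\|_{\sL^\infty(\R)}]\), which is all that matters once the maximum principle is in hand.

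Second, I would apply the classical Kruzkov existence and uniqueness theorem: for \(q_0\in\sL^\infty(\R)\) and locally Lipschitz flux \(f\), there exists a unique entropy solution \(q^*\) satisfying the Kruzkov entropy inequalities with \(\alpha_k(q)=|q-k|\), \(k\in\R\), together with the associated entropy fluxes \(\beta_k(q)=\sgn(q-k)(f(q)-f(k))\). The entropy formulation in \cref{defi:entropy}, which uses smooth convex \(\alpha\in\sC^2(\R)\) with \(\beta'\equiv\alpha' f'\), is well known to be equivalent to Kruzkov's formulation through a standard mollification/approximation argument (approximating \(|{\cdot}-k|\) by a family of convex \(\sC^2\) entropies and passing to the limit, and conversely recovering the \(\sC^2\)-entropy inequalities from Kruzkov's by convex combination and approximation).

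Third, the additional regularity \(q^*\in\sC([0,T];\sL^1_{\loc}(\R))\) and the preservation of \(\sTV\)-regularity follow from the classical BV theory of scalar conservation laws: Kruzkov's \(L^1\)-contraction together with the \(\sTV\)-bound inherited from \(q_0\in\sTV(\R)\) (which can be obtained, e.g., via vanishing viscosity or front-tracking approximations, both giving the needed compactness). The main technical obstacle is really just the verification that \(f\) extends across the origin without degenerating, which is mild given the monotonicity assumption on \(V\) and the a priori \(\sL^\infty\)-bound; once that is settled, everything else is a direct citation of classical results for one-dimensional scalar conservation laws.
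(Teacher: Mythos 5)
Your proposal takes essentially the same route as the paper: the paper's entire proof is a citation of the classical Kru\v{z}kov/BV theory (Bressan, Eymard et al., Kru\v{z}kov), and your reduction to that theory, including the equivalence of the smooth-convex-entropy formulation with the Kru\v{z}kov formulation, is exactly what those references supply. One sub-claim in your hypothesis check is false as stated, though: monotonicity plus \(V\in\sW^{1,\infty}_{\loc}(\R_{>0})\) does \emph{not} force \(V(0^{+})\) to be finite (e.g.\ \(V(x)=\log x\) or \(V(x)=-1/x\) is nondecreasing and locally Lipschitz on \(\R_{>0}\) yet tends to \(-\infty\)), so the local Lipschitz continuity of \(f(q)=qV(|q|)\) across the origin is an additional implicit assumption rather than a consequence of \cref{ass:input_datum} --- a gap the paper itself also leaves unaddressed by writing \(f(x)=xV(x)\) for all \(x\in\R\).
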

\begin{proof}
This can be found in \cite[Theorem 6.3]{bressan}, \cite[Theorem 19.1]{eymard}. For Kru{\v{z}}kov entropies and entropy flux pairs, we refer to
 \cite[Theorem 2, Theorem 5, Section 5 Item 4]{kruzkov}, or \cite{godlewski}.
\end{proof}

\section{The singular limit problem}

As we are interested in the convergence of \(q_{\eta}\) for \(\eta\rightarrow 0\), a compactness result on the nonlocal term is required, which is stated in the following lemma.
\begin{lem}[Compactness of {\(W_{\eta}[|q_{\eta}|,\gamma_{\eta}]\)} in {\(\sC([0,T];\sL^{1}_{\loc}(\R))\)}]\label{lem:compactness}
 Given the assumptions of \cref{theo:existence_uniqueness_max}, 
 and assuming that the kernel satisfies 
 \cite[Eq. (1.5) and (1.8)]{colombo2023nonlocal} or \cite[Assumption 2]{keimer2023singular}, there exists for every \(T\in\R_{>0}\) an \(\eps\in\R_{>0}\) such that
  \[
\big\{W[|q_{\eta}|,\gamma_{\eta}]\in \sC\big([0,T];\sL^{1}_{\loc}(\R)\big):\ \eta\in(0,\eps)\big\}\overset{\text{c}}{\hookrightarrow}\sC\big([0,T];\sL^{1}_{\loc}(\R)\big).
 \]
  \end{lem}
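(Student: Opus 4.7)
The plan is to reduce the claim to the sign-restricted compactness theory already available in the cited references by exploiting the fact, noted in the sketch of \cref{theo:existence_uniqueness_max}, that $|q_{\eta}|$ itself solves a (sign-restricted) nonlocal conservation law with nonnegative initial datum $|q_{0}|$, the same kernel $\gamma_{\eta}$, and the same velocity $V$. Once this observation is made rigorous, the nonlocal term $W[|q_{\eta}|,\gamma_{\eta}]=\gamma_{\eta}\ast|q_{\eta}|$ coincides with the classical nonlocal operator applied to a well-behaved nonnegative solution, for which compactness in $\sC([0,T];\sL^{1}_{\loc}(\R))$ is known under precisely the two classes of kernel assumptions listed.

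First I would record the a~priori bounds that follow from \cref{theo:existence_uniqueness_max}: a uniform-in-$\eta$ $\sL^\infty$ bound on $q_{\eta}$ (for $\eta$ small enough in the weak-maximum-principle case), and a uniform total-variation bound on $q_{\eta}(t,\cdot)$. These transfer directly to $|q_{\eta}|$ since the absolute value does not increase the total variation, and therefore to $W[|q_{\eta}|,\gamma_\eta](t,\cdot)$ by Young's convolution inequality together with $\|\gamma_{\eta}\|_{\sL^1}=1$. In particular $W[|q_\eta|,\gamma_\eta]$ is uniformly bounded in $\sL^{\infty}((0,T);\sL^{\infty}\cap\sBV(\R))$.

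Next I would establish uniform equicontinuity in time with values in $\sL^{1}_{\loc}(\R)$. Using the equation satisfied by $|q_{\eta}|$,
\[
\partial_{t}|q_{\eta}|+\partial_{x}\bigl(V(W[|q_{\eta}|,\gamma_{\eta}])|q_{\eta}|\bigr)=0,
\]
and testing against a compactly supported Lipschitz function, the uniform $\sL^\infty$ bound on $V(W[|q_\eta|,\gamma_\eta])|q_\eta|$ yields
\[
\||q_{\eta}(t,\cdot)|-|q_{\eta}(s,\cdot)|\|_{\sL^{1}(K)}\le C_{K}\,|t-s|
\]
for any compact $K\subset\R$, uniformly in $\eta$. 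Since
\[
W[|q_\eta|,\gamma_\eta](t,\cdot)-W[|q_\eta|,\gamma_\eta](s,\cdot)=\gamma_\eta\ast\bigl(|q_\eta(t,\cdot)|-|q_\eta(s,\cdot)|\bigr),
\]
another application of Young's inequality (with the fact that $\gamma_\eta$ is supported in $\R_{\le 0}$, so the convolution preserves localization up to a bounded shift) gives the same modulus of continuity for $W[|q_{\eta}|,\gamma_{\eta}]$ on compacts.

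Finally I would apply a standard Ascoli--Arzel\`a argument in $\sC([0,T];\sL^{1}_{\loc}(\R))$: pointwise-in-time precompactness in $\sL^{1}_{\loc}(\R)$ is furnished by Helly's selection theorem applied to the uniform $\sL^\infty\cap\sBV$ bound on $W[|q_\eta|,\gamma_\eta](t,\cdot)$, while the uniform time-modulus of continuity established above supplies the required equicontinuity in $t$. This produces the claimed compact embedding. The main obstacle is really only bookkeeping: one has to make sure that the weak-maximum-principle case of \cite{keimer2023singular} provides uniform bounds on a common interval $\eta\in(0,\eps)$, which is exactly why the statement restricts to such an $\eps$, and then essentially quote the compactness arguments of \cite[Sect.~3]{colombo2023nonlocal} and \cite{keimer2023singular} applied to $|q_{\eta}|$ instead of reproducing them.
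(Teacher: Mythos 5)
Your overall architecture (uniform $\sL^\infty$ bound, uniform spatial $\sBV$ bound on the nonlocal term, time-equicontinuity, then an Ascoli--Arzel\`a/Helly argument, which is what the cited Simon lemma packages) is the same as the paper's, and the reduction to the sign-restricted equation satisfied by $|q_{\eta}|$ is exactly the right way to make the cited references applicable. However, there is a genuine gap in how you obtain the key input: you assert a \emph{uniform-in-$\eta$} total-variation bound on $q_{\eta}(t,\cdot)$ and then push it through the absolute value and Young's inequality to get a uniform $\sBV$ bound on $W[|q_{\eta}|,\gamma_{\eta}]$. No such uniform bound on $q_{\eta}$ is available: \cref{theo:existence_uniqueness_max} only gives $q_{\eta}\in\sL^{\infty}((0,T);\sL^{\infty}\cap\sTV(\R))$ for each fixed $\eta$, with constants that may (and in general do) degenerate as $\eta\to0$. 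Indeed, if $\sTV(q_{\eta}(t,\cdot))$ were bounded uniformly in $\eta$, Helly's theorem applied to $q_{\eta}$ itself would give strong $\sL^{1}_{\loc}$ compactness of the solutions and the singular limit problem would essentially trivialize; the whole point of \cite[Theorem 1]{colombo2023nonlocal} and \cite[Theorem 7]{keimer2023singular} is that the uniform $\sTV$ estimate is proved \emph{directly for the nonlocal quantity} $W_{\eta}$, by exploiting the transport-type equation that $W_{\eta}$ itself satisfies, not by convolving a $\sBV$ bound on the solution. The fix is simply to quote those results for $|q_{\eta}|$ in place of your Young's-inequality step.

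A secondary, related soft spot: your time-equicontinuity argument ("testing against a compactly supported Lipschitz function yields $\||q_{\eta}(t,\cdot)|-|q_{\eta}(s,\cdot)|\|_{\sL^{1}(K)}\le C_{K}|t-s|$") again implicitly uses a uniform spatial $\sTV$ bound on $q_{\eta}$ to upgrade a weak (distributional) time modulus to an $\sL^{1}$ one. The uniform-in-$\eta$ time modulus should likewise be stated for $W_{\eta}$ and taken from the uniform $\sTV$ estimates on the nonlocal term (as the paper does via \cite[Lemma 1]{simon} and \cite[Theorem 8]{keimer2023singular}), rather than derived from properties of $q_{\eta}$ that are not uniform in $\eta$.
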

\begin{proof}
This is a consequence of the \(\sTV\) bounds uniform in \(\eta\in\R_{>0}\) for both classes of kernels (see \cite[Theorem 1]{colombo2023nonlocal} and \cite[Theorem 7]{keimer2023singular}) together with the uniform bounds of the solutions in \(\eta\), thanks to the maximum principle \cref{theo:existence_uniqueness_max}. Together with an equicontinuity in time as required by \cite[Lemma 1]{simon}, and which is also a consequence of the uniform \(\sTV\) estimate, the named compactness follows (as also explicitly stated in \cite[Theorem 8]{keimer2023singular}).
\end{proof}

Equipped with the results from the previous section, we show here that the solution to the nonlocal equation converges to the local entropy solution when the nonlocal operator converges to a point evaluation. We start with a short lemma showing the convergence to a weak local solution not only of the nonlocal operator but also weak-star convergence of the solution itself.
\begin{lem}[Convergence to a weak local solution]\label{lem:convergence_weak_solution}
Let \cref{ass:input_datum} be given. Then for any \(\eta_{k}\in\R_{>0},\ k\in\N_{\geq 1},\lim_{k\rightarrow\infty}\eta_{k}=0\), there exists a subsequence (still denoted by \(\eta_{k}\)) such that the nonlocal operator converges in the following sense:
\[
\lim_{k\rightarrow\infty}\|W[|q_{\eta}|,\gamma_{\eta}]-|q^{*}|\|_{\sC([0,T];\sL^{1}_{\loc}(\R))}=0,
\]
as well as
\[
q_{\eta}\weakstar q^{*}\ \text{ in }\ \sL^{\infty}(\OT),
\]
where \(q^{*}\in \sC\big([0,T];\sL^{1}_{\loc}(\R)\big)\) is a weak solution to the local equation.
\end{lem}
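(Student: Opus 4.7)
I will extract convergent subsequences (using \cref{lem:compactness} and Banach--Alaoglu), identify the $\sC([0,T];\sL^{1}_{\loc}(\R))$-limit of $W[|q_\eta|,\gamma_\eta]$ with the strong limit of $|q_\eta|$ by exploiting the sign-restricted nonlocal structure of the PDE for $|q_\eta|$, pass to the limit in both the original and the sign-restricted equations, and finally identify the common limit with $|q^*|$ via uniqueness for the resulting linear transport equations. Concretely: by \cref{lem:compactness}, along a relabelled subsequence $\eta_k$, $W[|q_{\eta_k}|,\gamma_{\eta_k}]\to w$ in $\sC([0,T];\sL^{1}_{\loc}(\R))$ for some $w$; the uniform $\sL^\infty$ bound from \cref{theo:existence_uniqueness_max} and Banach--Alaoglu give a further subsequence with $q_{\eta_k}\weakstar q^*$ in $\sL^\infty(\OT)$. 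Since $|q_\eta|$ itself solves a sign-restricted nonlocal conservation law (as derived in the proof of \cref{theo:existence_uniqueness_max}) with uniform $\sL^\infty\cap\sTV$ bounds and equicontinuity in time, the same Fr\'echet--Kolmogorov/Simon argument underlying \cref{lem:compactness} yields, along yet another sub-subsequence, $|q_{\eta_k}|\to u$ strongly in $\sC([0,T];\sL^{1}_{\loc}(\R))$. Since $\gamma_\eta$ is an approximate identity (unit $\sL^{1}$-mass, concentrating at $0$), $\gamma_{\eta_k}\ast|q_{\eta_k}|\to u$, hence $w=u$.

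\textbf{Passing to the limit.} Local Lipschitz continuity of $V$ on the invariant range yields strong convergence $V(W[|q_{\eta_k}|,\gamma_{\eta_k}])\to V(u)$ in $\sL^{1}_{\loc}$ with uniform $\sL^\infty$ bound; pairing this with $q_{\eta_k}\weakstar q^*$ (strong times weak-$*$ convergence in the flux product) and passing to the limit in \cref{defi:weak_solution_nonlocal} yields that $q^*$ is a weak solution of the \emph{linear} transport equation $\partial_t q^*+\partial_x(V(u)q^*)=0$ with initial datum $q_0$. The analogous passage in the sign-restricted equation for $|q_{\eta_k}|$ (where now strong convergence of $|q_{\eta_k}|$ is in hand, so even the nonlinear flux $V(u)u$ is handled by strong-strong convergence) gives that $u$ weakly solves $\partial_t u+\partial_x(V(u)u)=0$ with initial datum $|q_0|$.

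\textbf{Identifying $u=|q^*|$ (main obstacle).} Weak-$*$ convergence does not preserve absolute values, so this is the genuinely delicate step. The idea is that $u$ and $|q^*|$ both solve the same linear transport equation with common $\sL^\infty\cap\sBV$ velocity $V(u)$ and common initial datum $|q_0|$: for $u$ this is its PDE, and for $|q^*|$ it follows by multiplying the equation $\partial_t q^*+\partial_x(V(u)q^*)=0$ formally by $\sgn(q^*)$, obtaining $\partial_t|q^*|+\partial_x(V(u)|q^*|)=0$ with $|q^*(0,\cdot)|=|q_0|$. Uniqueness for linear transport with $\sBV$ coefficients (DiPerna--Lions/Ambrosio), equivalently the characteristic identities $u(t,X(t,y))=|q_0(y)|E(t,y)$ and $q^*(t,X(t,y))=q_0(y)E(t,y)$ sharing the positive multiplier $E(t,y)=\exp\bigl(-\int_0^t\partial_xV(u)(s,X(s,y))\dd s\bigr)$ along the flow $\dot X=V(u(t,X))$, then forces $|q^*|=u$ a.e. The rigorous justification of the $\sgn(q^*)$-chain rule (via a Kru\v{z}kov-type doubling or DiPerna--Lions renormalization) is the technical heart of the proof. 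Once settled, $w=u=|q^*|$ together with $q_{\eta_k}\weakstar q^*$ establishes both convergences claimed in the lemma.
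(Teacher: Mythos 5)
Your skeleton matches the paper's: Banach--Alaoglu gives \(q_{\eta_k}\weakstar q^{*}\), \cref{lem:compactness} gives a strong \(\sC([0,T];\sL^{1}_{\loc}(\R))\) limit of the nonlocal term, and the ``strong times weak-\(*\)'' argument passes to the limit in \cref{defi:weak_solution_nonlocal}. You also correctly locate the real difficulty: identifying the strong limit \(w\) of \(W[|q_{\eta_k}|,\gamma_{\eta_k}]\) with \(|q^{*}|\), where \(q^{*}\) is the weak-\(*\) limit (weak-\(*\) convergence does not commute with the absolute value). The paper's proof simply asserts this identification; your attempt to actually prove it is where the proposal breaks down.

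Two problems. The smaller one: the uniform-in-\(\eta\) \(\sTV\) bounds supplied by the cited references are bounds on the nonlocal term \(W_\eta\), not on \(|q_\eta|\) itself --- uniform \(\sTV\) control of the solution is precisely what is \emph{not} available in this literature (it can blow up). So you cannot run Fr\'echet--Kolmogorov/Simon on \(|q_{\eta_k}|\) directly. This is repairable, e.g.\ via \cref{eq:nonlocal_identity}, which gives \(\||q_\eta|-W_\eta\|_{\sL^{1}}\le\eta\,\sTV(W_\eta)\to0\), so \(|q_{\eta_k}|\) inherits the strong limit \(w\) of \(W_{\eta_k}\).

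The serious one: your identification \(u=|q^{*}|\) rests on renormalization (the \(\sgn\)-chain rule) and uniqueness for the continuity equation \(\partial_t\rho+\partial_x(V(u)\rho)=0\) with coefficient \(b=V(u)\). But \(u\) is only in \(\sL^\infty\cap\sBV\) (and at this stage only a weak, not yet entropy, solution), so \(\partial_x b\) is a signed measure with nontrivial atoms at the jumps of \(u\). DiPerna--Lions needs \(b\in\sW^{1,1}\) with bounded divergence; Ambrosio's \(\sBV\) theory needs the divergence to be absolutely continuous for renormalization. Neither holds across a jump of \(u\), and in exactly this compressive regime the continuity equation admits multiple bounded weak solutions, the flow \(\dot X=V(u(t,X))\) you invoke is not well defined, and the chain rule \(\partial_t|q^{*}|+\partial_x(b|q^{*}|)=0\) can genuinely fail (positive and negative mass can meet and cancel at a shock --- which is precisely the scenario one must exclude to get \(|q^{*}|=u\)). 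So the appeal to ``uniqueness for linear transport with \(\sBV\) coefficients'' does not close the gap; a correct argument must use additional structure (for instance that at fixed \(\eta\) the sign of \(q_\eta\) is transported by a bi-Lipschitz flow, so the positive and negative parts evolve with disjoint supports, and one must then rule out overlap in the limit). As written, the step you yourself call the technical heart of the proof is the step that is not actually proved.
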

\begin{proof}
Thanks to the uniform boundedness of \(q_{\eta}\), as stated in \cref{theo:existence_uniqueness_max}, we can invoke the Banach--Alaoglu--Bourbaki theorem \cite[Theorem 3.16]{brezis} and we obtain that there exists a subsequence (still denoted by \(\eta\)) and a \(q^{*}\in \sL^{\infty}((0,T);\sL^{\infty}(\R))\) such that
\[
q_{\eta}\weakstar q^{*}\ \text{ in }\  \sL^{\infty}((0,T);\sL^{\infty}(\R)).
\]
Using this subsequence in the nonlocal operator, we can apply the compactness result in \cref{lem:compactness} to deduce that there exists another subsequence, denoted by \(\big(\eta_{k}\big)_{k\in\N}\subset\R_{>0},\ \lim_{k\rightarrow\infty} \eta_{k}=0\) such that
\[
\lim_{k\rightarrow\infty}\|W[|q_{\eta_{k}}|,\gamma_{\eta_{k}}]-|q^{*}|\|_{\sC([0,T];\sL^{1}_{\loc}(\R))}=0.
\]
Next, we need to show that this limit point \(q^{*}\) is a weak local solution. For \(\eta\in\R_{>0}\), we have that \(q_{\eta_{k}}\) and the corresponding nonlocal operator satisfy 
\[\iint_{\OT}q_{\eta_{k}} \big(\phi_t+\phi_x V(W[|q_{\eta_{k}}|,\gamma_{\eta}])\big)\dd x\dd t+\int_{\R}\phi(0,\cdot)q_{0}\dd x=0.\]
However, thanks to the strong \(\sC(\sL^{1})\) convergence of the nonlocal operator, \(V\in \sW^{1,\infty}_{\loc}(\R)\) and \(q_{\eta}\) weak-star converges in \(\sL^{\infty}(\sL^{\infty})\), and thanks to the fact that the product of a strongly converging sequence and a weak-star converging sequence converges weak-star, we obtain in the limit \(k\rightarrow\infty\) that
\[
\iint_{\OT}q^{*} \big(\phi_t+\phi_x V(|q^{*}|)\big)\dd x\dd t+\int_{\R}\phi(0,\cdot)q_{0}\dd x=0,
\]
which is the weak formulation for the local conservation law, as stated in \cref{defi:weak_solution_local}. Thus, \(q^{*}\) is a weak solution to the local conservation law, concluding the proof.
\end{proof}\begin{rem}[Convergence and weak solutions]
    As is well known, the convergence to a weak solution of the local equation follows with classical compactness results. However, these weak solutions are in the local case not unique, requiring some well-known and established entropy conditions to single out the ``reasonable'' unique solution. This is what we will discuss in the following.
\end{rem}
\subsection{Entropy admissibility}
The following calculations will lead to the crucial result, that if \(q_{\eta}\) converges to a limit then it satisfies the entropy admissibility condition.
However, we will present these calculations only for the one-sided exponential kernel, strictly convex ``flux'' and provide some remarks about how this can be generalized to the arbitrary kernels. In \cref{fig:convergence}, the mentioned convergence is numerically exemplified for two initial data and velocity functions.

\begin{figure}
    \centering
  \includegraphics[width=.75\textwidth,clip,trim=70 380 65 60]{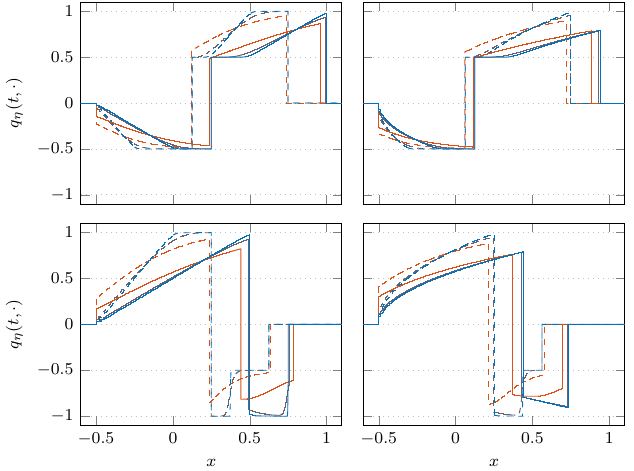}

\caption{Solution of the nonlocal balance law stated in \cref{eq:nonlocal_dynamics}  for $\eta \in  \{10^{-1},10^{-2},10^{-3}\}$ (\textbf{\textcolor{col2}{red}} to \textbf{\textcolor{col1}{blue}}) for initial datum $q_0 \equiv -\tfrac{1}{2}\chi_{(-0.5,0)}+\chi_{(0,0.5)}$ (\textbf{top}) and $q_0 \equiv \tfrac{1}{2}\chi_{(-0.5,0)}-\chi_{(0,0.5)}$ (\textbf{bottom}), and velocity $V \equiv \text{Id}$ (\textbf{left}) and $V \equiv \text{Id}^2$ (\textbf{right}). This illustrates the proven convergence for $\eta \rightarrow 0$ to entropy solutions to the local counterpart of the nonlocal conservation law, as stated in \cref{eq:local_dynamics}.}

\label{fig:convergence}

\end{figure}

\begin{thm}[Nonlocal solution entropy admissible in the limit/convergence]\label{theo:convergence_entropy}
Choose as nonlocal kernel \(\gamma_{\eta}\equiv\exp(\cdot/\eta)/\eta\) and consider the corresponding nonlocal solution \(q_{\eta}\in \sC\big([0,T];\sL^{1}_{\loc}(\R)\big)\) of \cref{eq:nonlocal_dynamics}.
Assume in addition that for \(\eps\in\R_{>0}\), the flux \(x\mapsto xV(x),\ x\in [-\|q_{0}\|_{\sL^{\infty}(\R)}-\eps,\|q_{0}\|_{\sL^{\infty}(\R)}+\eps]\), is strictly concave.
Then, this solution \(q_{\eta}\) is entropy admissible in the singular limit, i.e., it satisfies \(\forall \phi,\alpha\), as in \cref{defi:entropy},
\[
\lim_{\eta\rightarrow 0}\mathcal{E}[\phi,\alpha,q_{\eta}]\geq 0.
\]
In particular, the nonlocal \(q_{\eta}\) converges weak-star in \(\sL^{\infty}((0,T);\sL^{\infty}(\R))\) to the local entropy solution \(q^{*}\), and the nonlocal operator \(W[|q_{\eta}|,\gamma_{\eta}]\) converges strongly in \(\sC\big([0,T];\sL^{1}_{\loc}(\R)\big)\) to \(|q^{*}|\).
\end{thm}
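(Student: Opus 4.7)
The plan is to derive, via the distinguishing identity of the exponential kernel, a decomposition of \(\mathcal{E}[\phi,\alpha,q_\eta]\) into a term that passes to the limit by strong compactness and a dissipation term whose sign is controlled by the strict concavity of the flux. Integration by parts in the convolution yields
\[
\eta\,\partial_x W[|q_\eta|,\gamma_\eta] = |q_\eta| - W[|q_\eta|,\gamma_\eta].
\]
By the stability estimate of \cref{theo:existence_uniqueness_max}, it suffices to work with the smooth approximations \(q_{\eta,\eps}\in\sW^{1,\infty}(\OT)\) and to pass \(\eps\to 0\) by \(\sL^1\)-density at the very end. For such a smooth \(q_\eta\), I would multiply \cref{eq:nonlocal_dynamics} by \(\alpha'(q_\eta)\phi\), integrate by parts in both \(t\) and \(x\), and introduce the nondecreasing auxiliary \(G(q)\coloneqq q\alpha'(q)-\alpha(q)\) (monotone because \(\alpha\) is convex). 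A direct computation then yields
\begin{align*}
\mathcal{E}[\phi,\alpha,q_\eta]
&= \iint_{\OT}\!\big[\beta(q_\eta)-V(W[|q_\eta|,\gamma_\eta])\alpha(q_\eta)\big]\phi_x\dd x\dd t\\
&\quad+\iint_{\OT}\!V'(W[|q_\eta|,\gamma_\eta])\,G(q_\eta)\,\partial_x W[|q_\eta|,\gamma_\eta]\,\phi\dd x\dd t,
\end{align*}
splitting \(\mathcal{E}\) into a \emph{flux-defect} integral and a \emph{dissipation} integral.

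The next step is to upgrade the weak-star convergence of \cref{lem:convergence_weak_solution} to strong convergence \(q_\eta\to q^*\) in \(\sC([0,T];\sL^1_{\loc}(\R))\). This uses the uniform \(\sTV\) bound of \cref{theo:existence_uniqueness_max}, equicontinuity in time extracted from the PDE itself (which bounds \(\partial_t q_\eta\) in a suitable negative Sobolev norm uniformly in \(\eta\)), and a Kolmogorov--Riesz/Simon compactness argument. Because \(V\) is effectively even (as \(V(x)=V(|x|)\)) and \(\|W-|q_\eta|\|_{\sL^1}\leq \eta\,\sTV(q_0)\to 0\), one obtains \(V(W)\to V(|q^*|)=V(q^*)\) strongly. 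The flux-defect integral therefore converges to \(\iint_{\OT}[\beta(q^*)-V(|q^*|)\alpha(q^*)]\phi_x\dd x\dd t\); an elementary computation gives the identity \((\beta(q)-V(|q|)\alpha(q))'=V'(|q|)\sgn(q)G(q)\), so this limit already carries a natural dissipation-like structure.

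The hard part will be the passage to the limit in the dissipation integral: while \(V'(W)G(q_\eta)\) converges strongly in \(\sL^1_{\loc}\), the factor \(\partial_x W\) converges only in the sense of distributions to \(\partial_x|q^*|\) (a signed measure supported on the jump set of \(q^*\)), so Vol'pert-type averaged values across shock profiles are what matter. Here the strict concavity of \(f(x)=xV(x)\) on a neighborhood of the range of the solutions enters crucially: it excludes non-classical undercompressive shocks and, together with the monotonicity of \(G\), forces the limit measure to carry the correct sign---essentially the same mechanism that underpins the sign-restricted singular limits of \cite{colombo2023nonlocal,keimer2023singular}, now applied to \(|q_\eta|\) via the sign-restricted conservation law it satisfies (cf.\ the proof of \cref{theo:existence_uniqueness_max}). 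Summing the two contributions gives \(\mathcal{E}[\phi,\alpha,q^*]\geq 0\) for every admissible entropy pair and every non-negative \(\phi\), hence \(q^*\) coincides with the unique entropy solution of \cref{eq:local_dynamics} by \cref{theo:existence_uniqueness_local}; uniqueness then upgrades the subsequential convergence of \cref{lem:convergence_weak_solution} to convergence of the full net \((q_\eta)_{\eta>0}\).
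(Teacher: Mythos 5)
Your opening moves—the exponential-kernel identity $\eta\,\partial_x W_\eta=|q_\eta|-W_\eta$, the reduction to smooth solutions via the $\sL^1$-stability in \cref{theo:existence_uniqueness_max}, and the (algebraically correct) split of $\mathcal E[\phi,\alpha,q_\eta]$ into a flux-defect integral and a dissipation integral—coincide with the paper's starting point. But the two steps that carry the actual content have genuine gaps. The first is your upgrade of the weak-star convergence of \cref{lem:convergence_weak_solution} to strong convergence $q_\eta\to q^*$ in $\sC([0,T];\sL^1_{\loc}(\R))$ "using the uniform $\sTV$ bound of \cref{theo:existence_uniqueness_max}". No such $\eta$-uniform bound on $q_\eta$ is available: the theorem gives $q_\eta\in\sTV(\R)$ for each fixed $\eta$ only, and the sole quantity with an $\eta$-uniform $\sTV$ bound is the nonlocal term $W_\eta$ — which is precisely why \cref{lem:compactness} is formulated for $W[|q_\eta|,\gamma_\eta]$ and why \cref{lem:convergence_weak_solution} claims only weak-star convergence of $q_\eta$ itself. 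The paper's argument is built so as never to need strong compactness of $q_\eta$.

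The second gap is the dissipation integral, which you correctly flag as the hard part but then dispatch by appeal to "Vol'pert-type averaged values", exclusion of undercompressive shocks, and monotonicity of $G(q)=q\alpha'(q)-\alpha(q)$ (note $G'(q)=q\alpha''(q)$, so $G$ is \emph{not} monotone on sign-changing ranges). Passing to the limit in $\iint V'(W_\eta)G(q_\eta)\partial_xW_\eta\,\phi$ when $\partial_xW_\eta$ converges only as a measure and $G(q_\eta)$ only in $\sL^1_{\loc}$ is exactly the product-of-weak-limits obstruction, and nothing in your sketch resolves it; the desired sign is asserted rather than derived. The paper sidesteps this entirely: it fixes the quadratic entropy $\alpha''\equiv1$ (legitimate because, for a strictly concave flux, a single strictly convex entropy--entropy-flux pair already singles out the entropy solution, by the Panov / De Lellis--Otto-type uniqueness results — this is where the strict-concavity hypothesis is actually consumed), and then a chain of explicit integrations by parts combined repeatedly with the kernel identity rewrites the dissipation term as quantities controlled by $\||q_\eta|-W_\eta\|_{\sC([0,T];\sL^1_{\loc}(\R))}\to0$ plus the manifestly nonnegative remainder $\eta\iint V'(W_\eta)\,(\partial_xW_\eta)^2\,\tfrac12\big(W_\eta+|q_\eta|\big)\phi\ge0$ (nonnegative since $V'\geq 0$ and $W_\eta+\tfrac{\eta}{2}\partial_xW_\eta=\tfrac12(W_\eta+|q_\eta|)\geq0$). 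To repair your proof you would need either to reproduce such an explicit sign computation or to supply a genuine argument for the measure-limit step; as written, the entropy inequality does not follow.
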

\begin{proof}
We first remark that in the case of the considered exponential kernel, we obtain the following identity \(\forall (t,x)\in\OT\):
\begin{equation}
\partial_{x}W_\eta (t,x)=\tfrac{1}{\eta}\big(|q_{\eta}(t,x)|-W_\eta (t,x)\big).\label{eq:nonlocal_identity}
\end{equation} Here we use the abbreviation $W_\eta \coloneqq W_\eta[|q_\eta|,\tfrac{1}{\eta}\exp(\tfrac{\cdot}{\eta})]$,
which will be useful later.

Assuming sufficient regularity on the nonlocal equation, which is possible thanks to the stability in \(\sL^{1}\) (see again \cref{theo:existence_uniqueness_max}), we can perform the following manipulations when plugging \(q_{\eta}\) into the entropy admissibility condition:
\begin{align}
&\mathcal{E}[\phi,\alpha,q_{\eta}] \notag\\
&=-\iint_{\OT}\Big(\alpha'(q_{\eta})\partial_{t}q_{\eta}+ \beta'(q_{\eta})\partial_{x}q_{\eta}\Big)\phi\dd x\dd t\notag\\
&=-\iint_{\OT}\!\!\!\!\!\alpha'(q_{\eta})\Big(\partial_{t}q_{\eta}+ f'(q_{\eta})\partial_{x}q_{\eta}\Big)\phi\dd x\dd t\notag\\
&=\iint_{\OT}\!\!\!\!\!\alpha'(q_{\eta})\Big(\partial_{x}\big(V(W_{\eta})q_{\eta}\big)- V(|q_{\eta}|)\partial_{x}q_{\eta}-V'(|q_{\eta}|)\partial_{x}|q_{\eta}|q_{\eta}\Big)\phi\dd x\dd t \notag\\
&=\iint_{\OT}\alpha''(q_{\eta})\partial_{x}q_{\eta}\big(V(|q_{\eta}|)-V(W_{\eta})\big)q_{\eta}\phi\dd x\dd t \label{eq:42}\\
&\quad +\iint_{\OT}\alpha'(q_{\eta})\big(V(|q_{\eta}|)-V(W_{\eta})\big)q_{\eta}\phi_{x}\dd x\dd t.\notag
\end{align}
To show a lower bound for $\mathcal E,$ we first focus on the first integral in the latter expression. Take \(\alpha''=1\) (this is not a restriction for the entropy admissibility as we can take advantage of the results in \cite{otto,panov1994uniqueness}, which guarantee the uniqueness of solutions when only using one entropy flux pair) to obtain
\begin{align*}
\eqref{eq:42}&=\iint_{\OT}\partial_{x}q_{\eta}\big(V(|q_{\eta}|)-V(W_{\eta})\big)q_{\eta}\phi\dd x\dd t\\
    &=\iint_{\OT}V(|q_{\eta}|)|q_{\eta}|\partial_{x}|q_{\eta}|\phi\dd x\dd t -\iint_{\OT} V(W_{\eta})q_{\eta}\partial_{x}q_{\eta}\phi\dd x\dd t.
    \intertext{With $G(y) \coloneqq \int_0^y sV(s) \dd s$ and by integration by parts in the first integral, we continue:}
    \eqref{eq:42}&=-\iint_{\OT} G(|q_{\eta}|)\phi_{x}\dd x\dd t-\iint_{\OT} V(W_{\eta})q_{\eta}\partial_{x}q_{\eta}\phi\dd x\dd t\\
    &=-\iint_{\OT} \big(G(|q_{\eta}|)-G(W_\eta)\big)\phi_{x}\dd x\dd t -\iint_{\OT} G(W_{\eta})\phi_{x}\dd x\dd t\\
    &\quad -\iint_{\OT} V(W_{\eta})q_{\eta}\partial_{x}q_{\eta}\phi\dd x\dd t.\\
    \intertext{Another integration by parts in the second term yields}
    \eqref{eq:42}&=-\iint_{\OT} \big(G(|q_{\eta}|)-G(W_\eta)\big)\phi_{x}\dd x\dd t \\
    &\quad +\iint_{\OT} V(W_{\eta})W_{\eta}\partial_{x}W_{\eta}\phi\dd x\dd t -\iint_{\OT} V(W_{\eta})q_{\eta}\partial_{x}q_{\eta}\phi\dd x\dd t,
    \intertext{and integration by parts in the second and third expressions leads to}
    \eqref{eq:42}&= -\iint_{\OT} \big(G(|q_{\eta}|)-G(W_\eta)\big)\phi_{x}\dd x\dd t\\
    &\quad  -\tfrac{1}{2}\iint_{\OT} V(W_{\eta})W_{\eta}^{2}\phi_x\dd x \dd t -\tfrac{1}{2}\iint_{\OT}V'(W_{\eta})W_{\eta}^{2} \partial_{x} W_{\eta}\phi\dd x \dd t\\
    &\quad +\tfrac{1}{2}\iint_{\OT} V'(W_{\eta})\partial_{x}W_{\eta}q_{\eta}^{2}\phi\dd x\dd t +\tfrac{1}{2}\iint_{\OT} V(W_{\eta})q_{\eta}^{2}\phi_x\dd x\dd t\\
    &= -\iint_{\OT}\!\!\!\!\! \big(G(|q_{\eta}|)-G(W_\eta)\big)\phi_{x}\dd x\dd t  +\tfrac{1}{2}\!\!\iint_{\OT}\!\!\!\!\!\big(q_{\eta}^{2}-W_{\eta}^{2}\big) V(W_{\eta})\phi_x\dd x \dd t\\
    &\quad -\tfrac{1}{2}\iint_{\OT}W_{\eta}^{2} V'(W_{\eta})\partial_{x} W_{\eta}\phi\dd x \dd t+ \tfrac{1}{2}\iint_{\OT} V'(W_{\eta})\partial_{x}W_{\eta}q_{\eta}^{2}\phi\dd x\dd t.\\
    \intertext{Using the identity for the nonlocal operator in \cref{eq:nonlocal_identity}\(|q_{\eta}|=W_{\eta}+\eta \partial_{x}W_{\eta}\), which implies \(q_{\eta}^{2}=W_{\eta}^{2}+2\eta W_{\eta}\partial_{x}W_{\eta}+\eta^{2}\big(\partial_{x}W_{\eta}\big)^{2}\), in the last term yields}
    \eqref{eq:42}&= -\iint_{\OT}\!\!\!\!\! \Big(G(|q_{\eta}|)-G(W_\eta)\Big)\phi_{x}\dd x\dd t  +\tfrac{1}{2}\!\!\iint_{\OT}\!\!\!\!\!\big(q_{\eta}^{2}-W_{\eta}^{2}\big) V(W_{\eta})\phi_x\dd x \dd t\\
    &\quad -\tfrac{1}{2}\iint_{\OT} V'(W_{\eta})\partial_{x} W_{\eta}W_{\eta}^{2}\phi\dd x +\tfrac{1}{2}\iint_{\OT} V'(W_{\eta})\partial_{x}W_{\eta}W_{\eta}^{2}\phi\dd x\dd t\\
    &\quad +\eta\!\iint_{\OT}\!\!\!\! V'(W_{\eta})\big(\partial_{x}W_{\eta}\big)^{2}W_{\eta}\phi\dd x\dd t +\tfrac{\eta^{2}}{2}\!\!\iint_{\OT}\!\!\!\! V'(W_{\eta})\big(\partial_{x}W_{\eta}\big)^{3}\phi\dd x\dd t\\
    &= -\iint_{\OT}\!\!\!\!\! \big(G(|q_{\eta}|)-G(W_\eta)\big)\phi_{x}\dd x\dd t  +\tfrac{1}{2}\!\!\iint_{\OT}\!\!\!\!\!\big(q_{\eta}^{2}-W_{\eta}^{2}\big) V(W_{\eta})\phi_x\dd x \dd t\\
    &\quad +\eta\iint_{\OT} V'(W_{\eta})\big(\partial_{x}W_{\eta}\big)^{2}\big(W+\tfrac{\eta}{2}W_{x}\big)\phi\dd x\dd t.
    \intertext{Again, thanks to the identity in \cref{eq:nonlocal_identity}, as well as \(|q_{\eta}|=W_{\eta}+\eta\partial_{x}W_{\eta}\) and \(V'\geqq0\), we can estimate the last term by zero from below and obtain}
    \eqref{eq:42}&\geq -\iint_{\OT}\!\!\!\!\! \big(G(|q_{\eta}|)-G(W_\eta)\big)\phi_{x}\dd x\dd t  +\tfrac{1}{2}\!\!\iint_{\OT}\!\!\!\!\!\big(q_{\eta}^{2}-W_{\eta}^{2}\big) V(W_{\eta})\phi_x\dd x \dd t.
\end{align*}
Thus, in total, we obtain
\begin{align*}
\mathcal{E}[\phi,\alpha,q_{\eta}] &\geq -\iint_{\OT} \big(G(|q_{\eta}|)-G(W_\eta)\big)\phi_{x}\dd x\dd t  \\
&\quad +\tfrac{1}{2}\iint_{\OT}\big(q_{\eta}-W_{\eta}\big)\big(q_{\eta}+W_{\eta}\big)  V(W_{\eta})\phi_x\dd x \dd t \\
&\quad + \iint_{\OT}q_\eta^2\big(V(|q_{\eta}|)-V(W_{\eta})\big)\phi_{x}\dd x\dd t\\
    &\overset{\eta\rightarrow 0}{=}0,
\end{align*}
which holds when recalling that, by definition, we have the following:
\begin{itemize}
    \item  \(V,G\in \sW^{1,\infty}_{\loc}(\R)\) according to \cref{ass:input_datum} and $G(\cdot)\coloneqq \int_0^\cdot xV(x) \dd x$,
    \item  \(\||q_{\eta}|-W_{\eta}\|_{\sC([0,T];\sL^{1}_{\loc}(\R))}\overset{\eta\rightarrow 0}{\longrightarrow} 0\) according to \cref{lem:convergence_weak_solution},
    \item  \(q_{\eta}\) and \(W_\eta\) are uniformly bounded in \(\eta\) thanks to \cref{theo:existence_uniqueness_max}.
\end{itemize}
Up to now, this convergence only holds for a suitable subsequence. But as the local entropy solution is unique (see \cref{theo:existence_uniqueness_local}) and we can show for all subsequences that there exists a subsequence converging to the local entropy solution, i.e., to the same limit, we arrive at the convergence to the entropy solution for any sequence.
This concludes the proof.
\end{proof}

\begin{rem}[Generalization] \label{rem:generalization}
    Similarly to the proof of the latter theorem, one can show, as in \cite{keimer2023singular} and \cite{colombo2023nonlocal}, that the convergence to the entropy solution in \cref{theo:convergence_entropy} holds for all kernels defined in \cref{ass:input_datum} and that the strict concavity assumption can be removed as well, so that in the end, we obtain the general convergence result for the problem class in \cref{eq:nonlocal_dynamics}.
\end{rem}

\subsection*{Acknowledgment}
L. Pflug acknowledges funding by the Deutsche Forschungsgemeinschaft (German Research Foundation)---Project-ID 416229255---SFB 1411.

\bibliographystyle{plain}
\bibliography{biblio}

\end{document}